\theoremstyle{plain}
\newtheorem{theorem}{Theorem}
\newtheorem{corollary}{Corollary}
\newtheorem{proposition}{Proposition}
\theoremstyle{definition}
\theoremstyle{remark}
\numberwithin{equation}{section}
\newdimen\plusheight
\def\+{\;\lower\plusheight\hbox{$+$}\;}
\newdimen\minusheight
\def\-{\;\lower\minusheight\hbox{$-$}\;}
\newdimen\cdotsheight
\def\cds{\lower\cdotsheight\hbox{$\cdots$}}
\begin{document}
\title[Combinatorial Identities and  $2 \times 2$ matrices ]
 {Further Combinatorial Identities deriving from the $n$-th power of a
$2 \times 2$ matrix }
\author{J. Mc Laughlin}
\address{Mathematics Department\\
 Trinity College\\
300 Summit Street, Hartford, CT 06106-3100}
\email{james.mclaughlin@trincoll.edu}
\author{ Nancy J. Wyshinski}
\address{Mathematics Department\\
       Trinity College\\
        300 Summit Street, Hartford, CT 06106-3100}
\email{nancy.wyshinski@trincoll.edu}

\keywords{}
\subjclass{}
\date{March 4th, 2004}
\begin{abstract}

In this paper we use a  formula for the $n$-th power of a
$2\times2$ matrix $A$ (in terms of the entries in $A$) to derive various combinatorial identities.
Three examples of our results follow.

1) We show that if $m$ and $n$ are positive integers and
$s \in \{0,1,2,\dots,$ $\lfloor (mn-1)/2 \rfloor \}$, then
\begin{multline*}
\sum_{i,j,k,t}2^{1+2t-mn+n}
\frac{(-1)^{nk+i(n+1)}}{1+\delta_{(m-1)/2,\,i+k}}
\binom{m-1-i}{i}
 \binom{m-1-2i}{k}\times\\
\binom{n(m-1-2(i+k))}{2j}\binom{j}{t-n(i+k)}
\binom{n-1-s+t}{s-t}\\
=\binom{mn-1-s}{s}.
\end{multline*}

2) The generalized Fibonacci
polynomial $f_{m}(x,s)$ can be expressed as
\[
f_{m}(x,s)= \sum_{k=0}^{\lfloor (m-1)/2 \rfloor}\binom{m-k-1}{k}x^{m-2k-1}s^{k}.
\]
We prove that the following functional equation holds:
\begin{equation*}
f_{mn}(x,s)=f_{m}(x,s)\times
 f_{n}\left (\,f_{m+1}(x,s)+sf_{m-1}(x,s), \,-(-s)^{m}\right) .
\end{equation*}

3) If an arithmetical function $f$ is multiplicative and for each prime $p$ there is a
complex number $g(p)$ such that
\begin{equation*}
f(p^{n+1}) = f(p)f(p^{n})- g(p)f(p^{n-1}), \hspace{15pt} n \geq 1,
\end{equation*}
then $f$ is said to be \emph{specially multiplicative}. We give another derivation of the
following formula for a specially multiplicative function $f$ evaluated at a prime power:
\begin{equation*}
f(p^{k})=\sum_{j=0}^{\lfloor k/2 \rfloor}(-1)^{j} \binom{k-j}{j}f(p)^{k-2j}g(p)^{j}.
\end{equation*}

We also prove various other combinatorial identities.
\end{abstract}

\maketitle

\section{Introduction } \label{S:intro}
Throughout the paper, let $I$ denote the $2\times2$-identity
matrix and $n$ an arbitrary positive integer.
 In \cite{McL04}, the first author proved the following theorem, which gives a formula
for the $n$-th power of a $2 \times 2$ matrix in terms of its entries:
\begin{theorem}\label{t1}
Let
\begin{equation*}
A= \left (
\begin{matrix}
a & b \\
c & d
\end{matrix}
\right )
\end{equation*}
be an arbitrary $2\times2$ matrix and let $T=a+d$ denote its trace
and $D= ad-bc$ its determinant. Let
\begin{equation*}
y_{n} = \sum_{i=0}^{\lfloor n/2 \rfloor}\binom{n-i}{i}T^{n-2
i}(-D)^{i}.
\end{equation*}
Then, for $n \geq 1$,
\begin{equation*}
A^{n}=\left (
\begin{matrix}
y_{n}-d \,y_{n-1} & b \,y_{n-1} \\
c\, y_{n-1}& y_{n}-a\, y_{n-1}
\end{matrix}
\right ).
\end{equation*}
\end{theorem}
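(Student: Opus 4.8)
The plan is to prove the formula by induction on $n$, with the Cayley--Hamilton theorem as the driving tool. Since $A$ satisfies its own characteristic polynomial, we have $A^2 = TA - DI$, so every power $A^n$ collapses to a linear combination of $A$ and $I$. Concretely, I would first prove the cleaner intermediate statement
\[
A^n = y_{n-1}\,A - D\,y_{n-2}\,I \qquad (n \ge 1),
\]
adopting the conventions $y_0 = 1$ and $y_{-1} = 0$, and only at the end rewrite this in the entrywise form demanded by the theorem.

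The first real step, and the one I expect to be the main obstacle, is to show that the scalars $y_n$ defined by the binomial sum satisfy the second-order linear recurrence
\[
y_n = T\,y_{n-1} - D\,y_{n-2} \qquad (n \ge 2),
\]
with $y_0 = 1$ and $y_1 = T$ read off directly from the defining sum. To establish this I would expand $T\,y_{n-1}$ and $-D\,y_{n-2}$ as sums over the index $i$, reindex the second sum by $i \mapsto i-1$ so that both sums carry the common factor $T^{n-2i}(-D)^i$, and then combine the two binomial coefficients using Pascal's rule
\[
\binom{n-1-i}{i} + \binom{n-1-i}{i-1} = \binom{n-i}{i}.
\]
The resulting sum is exactly $y_n$. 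The only delicate point is bookkeeping with the upper summation limits, which depend on the parity of $n$; the terms that fall outside one range vanish because the relevant binomial coefficient is zero, so the identity holds uniformly.

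With the recurrence in hand, the induction on the intermediate statement is immediate. The base case $n=1$ reads $A = y_0 A - D y_{-1} I = A$. For the inductive step I would multiply the intermediate identity by $A$ and apply Cayley--Hamilton:
\[
A^{n+1} = y_{n-1}A^2 - D y_{n-2}A = y_{n-1}(TA - DI) - D y_{n-2}A = (T y_{n-1} - D y_{n-2})A - D y_{n-1}I,
\]
and the recurrence identifies the bracket as $y_n$, giving $A^{n+1} = y_n A - D y_{n-1} I$, which is the intermediate statement with $n$ advanced by one. Note that because Cayley--Hamilton reduces $A^2$, this is a single-step induction and one base case suffices.

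Finally I would convert $y_{n-1}A - D y_{n-2}I$ into the claimed matrix of entries. The off-diagonal entries are at once $b\,y_{n-1}$ and $c\,y_{n-1}$. For the $(1,1)$ entry, the linear combination gives $a\,y_{n-1} - D\,y_{n-2}$; writing the recurrence as $D\,y_{n-2} = T y_{n-1} - y_n = (a+d)y_{n-1} - y_n$ turns this into $y_n - d\,y_{n-1}$, and symmetrically the $(2,2)$ entry becomes $y_n - a\,y_{n-1}$. This matches the matrix in the statement and completes the proof.
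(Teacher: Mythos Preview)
Your proof is correct. Note that the paper does not itself prove Theorem~\ref{t1}; it quotes the result from \cite{McL04} and only records that the proof there relied on the recurrence $y_{k+1}=(a+d)y_k+(bc-ad)y_{k-1}$, which is exactly the relation you establish and exploit. So your argument is fully in line with the approach the paper indicates.
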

The proof used the fact that
\begin{equation}\label{yrecur}
y_{k+1}=(a+d)y_{k}+(bc-ad)y_{k-1}.
\end{equation}
This theorem was then used to derive various binomial identities. As an
example, we cite the following corollary.

\begin{corollary}
Let $n$ be a positive integer and let $m$ be an integer with $0
\leq m \leq 2n$. Then for $ -n \leq w \leq n $,
{\allowdisplaybreaks
\begin{multline*}
\sum_{k=0}^{n-1} \binom{n-1-k}{k} \binom{n}{w+k}
\binom{k+w}{m-k-w}(-1)^{k}=\\
 \sum_{k=-2w - n + m + 1}^{m - w} \binom{n}{k+w}
\binom{n}{n+k+w-m} \binom{k+n+2w-m-1}{k}(-1)^{k} .
\end{multline*}
}
\end{corollary}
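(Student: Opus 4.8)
The plan is to treat this as a direct consequence of Theorem~\ref{t1}, using the paper's signature device: evaluate one entry of $A^n$ for a cleverly chosen $2\times2$ matrix $A$ in two genuinely different ways, and then compare the coefficient of a single monomial, whose two exponents will be governed by the parameters $m$ and $w$. The factor $\binom{n-1-k}{k}$ together with the sign $(-1)^k$ is exactly the coefficient appearing in $y_{n-1}=\sum_k\binom{n-1-k}{k}T^{n-1-2k}(-D)^k$ when $-D=-1$, so the first decision is to take $A$ with determinant $D=1$ and a free (Laurent) parameter, say $A$ the companion matrix of $\lambda^2-T\lambda+1$ with $T=z+z^{-1}$. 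For such $A$ the quantity $y_{n-1}$ is a Chebyshev-/Fibonacci-type polynomial, and its entries are supplied by Theorem~\ref{t1}, so that $(-1)^k$ and the $y$-coefficients $\binom{n-1-k}{k}$ and $\binom{k+w}{m-k-w}$ (the latter being the $y_m$-coefficient attached to $T^{2(k+w)-m}(-D)^{m-k-w}$) will all emerge on the left, while the two factors $\binom{n}{k+w}$, $\binom{n}{n+k+w-m}=\binom{n}{m-k-w}$ on the right carry the unmistakable signature of the binomial theorem applied to an $n$-th power.

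First I would write the chosen entry of $A^n$ via Theorem~\ref{t1} in terms of $y_{n-1}$ (and, where needed, $y_m$), expand the trace-powers $T^{n-1-2k}$, and collect the coefficient of the monomial indexed by $(m,w)$; this should produce the left-hand sum. Next I would recompute the same entry by a second route that naturally exposes two $n$-th-power expansions — either the eigen/spectral expansion of $A^n$, the product decomposition $A^{2n}=A^n A^n$ (note the bound $m\le 2n$ and the symmetric range $-n\le w\le n$, which fit exponents in an $n$-th power and in its square), or a commuting factorization $A=BC$ — and extract the same coefficient. In the second computation the weight $(-1)^k\binom{k+n+2w-m-1}{k}=\binom{-(n+2w-m)}{k}$ is precisely the coefficient of $z^k$ in $(1+z)^{-(n+2w-m)}$, so the right-hand side is a Chu--Vandermonde-type convolution of $(1+z)^n$, $(1+z)^n$, and $(1+z)^{-(n+2w-m)}$; identifying it this way is what pins down the exact shape of that sum. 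Equating the two coefficients then yields the claimed identity.

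The hard part will be the bookkeeping that reduces each computation, which initially produces a multi-index sum, to the single-index forms displayed, and then reconciles the two very different summation ranges: $0\le k\le n-1$ on the left against $-2w-n+m+1\le k\le m-w$ on the right. The cleanest way through is to let $k$ run over all integers on both sides, argue that every ``extra'' term vanishes because one of its binomial coefficients is zero outside the stated range, and collapse the auxiliary index on each side by a single application of Vandermonde's convolution (or, equivalently, by matching coefficients in the product of generating functions above). Getting the Laurent-exponent bookkeeping for $w$ correct across its full range $[-n,n]$, and verifying that the boundary terms genuinely drop out rather than leaving a correction, is the delicate step on which the whole identity turns.
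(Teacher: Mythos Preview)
The paper does not prove this corollary at all: it is quoted in the Introduction merely as an example of what was obtained in \cite{McL04}, with the words ``As an example, we cite the following corollary.'' There is therefore no proof in the present paper to compare your proposal against.

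As to your proposal itself, it is a strategy outline rather than a proof. You correctly spot that the factor $\binom{n-1-k}{k}(-1)^k$ is the $y_{n-1}$-coefficient when $D=1$, and that $(-1)^k\binom{k+n+2w-m-1}{k}=\binom{-(n+2w-m)}{k}$ points toward a negative-exponent binomial series; both observations are sound and in the spirit of the paper's methods. But the plan never commits to a specific matrix $A$, never says which entry of which matrix power is being computed, and leaves the two ``genuinely different'' evaluations as a menu of options (spectral expansion, $A^{2n}=A^nA^n$, or a factorization $A=BC$) rather than choosing one and carrying it through. You yourself flag that ``the hard part will be the bookkeeping'' and that reconciling the two summation ranges ``is the delicate step on which the whole identity turns'' --- but that delicate step is precisely what a proof must supply, and it is absent here. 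In particular, the claim that a single Vandermonde convolution collapses the auxiliary index on each side is asserted, not demonstrated, and it is not obvious that the Laurent bookkeeping with $T=z+z^{-1}$ produces exactly the three-binomial products displayed rather than something requiring further manipulation.

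In short: the general direction is consistent with the paper's philosophy and plausibly with the original argument in \cite{McL04}, but what you have written is a plan with its crucial computations deferred, not a proof that can be checked.
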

In this present paper we use Theorem \ref{t1} to derive some further identities.

\section{A Binomial Identity deriving from $(A^{m})^{n}=A^{mn}$}
We use the trivial identity $(A^{m})^{n}=A^{mn}$ to prove the following
theorem.
\begin{theorem}
Let $m$ and $n$ be positive integers and let
$s \in \{0,1,2,\dots, \lfloor (mn-1)/2 \rfloor \}$. Then
\begin{multline}\label{mns}
\sum_{i,j,k,t}2^{1+2t-mn+n}
\frac{(-1)^{nk+i(n+1)}}{1+\delta_{(m-1)/2,\,i+k}}
\binom{m-1-i}{i}
 \binom{m-1-2i}{k}\times\\
\binom{n(m-1-2(i+k))}{2j}\binom{j}{t-n(i+k)}
\binom{n-1-s+t}{s-t}=\binom{mn-1-s}{s},
\end{multline}
where $i$, $j$, $k$ and $t$ run through  integral values which
keep all binomial entries in \eqref{mns} non-negative, and
 \[
\delta_{p,\,q} =
\begin{cases}
1,&p=q,\\
0,& p \not = q.
\end{cases}
\]

\end{theorem}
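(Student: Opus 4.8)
The plan is to exploit the trivial identity $(A^m)^n = A^{mn}$ at the level of the scalar sequences $y_k$ from Theorem~\ref{t1}. The key observation is that Theorem~\ref{t1} expresses $A^n$ entirely in terms of $y_n$ and $y_{n-1}$, and that $y_k = y_k(T,D)$ is itself a polynomial in the trace $T$ and determinant $D$. So I would first compute $A^{mn}$ in two ways. On one side, applying Theorem~\ref{t1} directly to $A$ with exponent $mn$ gives entries built from $y_{mn}(T,D)$ and $y_{mn-1}(T,D)$. On the other side, setting $B=A^m$ and applying Theorem~\ref{t1} to $B$ with exponent $n$ expresses $A^{mn}=B^n$ in terms of $y_n(T_B,D_B)$ and $y_{n-1}(T_B,D_B)$, where $T_B=\operatorname{tr}(A^m)$ and $D_B=\det(A^m)=D^m$. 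The goal is then to match a single scalar coefficient on both sides; comparing the appropriate entry (say the $(1,2)$-entry $b\,y_{mn-1}$ on the left against $b\,y_{n-1}(T_B,D_B)$ on the right, the factors of $b$ cancelling) should reduce the matrix identity to the scalar identity $y_{mn-1}(T,D) = y_{n-1}(T_B, D_B)$.

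Next I would make everything explicit. From Theorem~\ref{t1}, $T_B=\operatorname{tr}(A^m) = 2y_m - T\,y_{m-1}$, and using the defining sum for $y_m$ one writes $T_B$ as an explicit polynomial in $T$ and $D$; this is where the binomials $\binom{m-1-i}{i}$ and $\binom{m-1-2i}{k}$, the sign $(-1)^{\cdots}$, and the Kronecker-delta correction factor $1/(1+\delta_{(m-1)/2,\,i+k})$ will enter, the delta accounting for the doubled middle term when $m$ is odd and $i+k$ hits the center of the expansion. Likewise $D_B = (-D)^m$ up to sign, producing the $-(-s)^m$-type argument one sees mirrored in the Fibonacci functional equation of the abstract. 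I would then substitute these explicit forms into $y_{n-1}(T_B,D_B)=\sum_i \binom{n-1-i}{i}T_B^{\,n-1-2i}(-D_B)^i$ and expand. The powers $T_B^{\,n-1-2i}$ expand by the multinomial/binomial theorem, generating the inner binomials $\binom{n(m-1-2(i+k))}{2j}$ and $\binom{j}{t-n(i+k)}$ together with the powers of $2$ tracked by the exponent $1+2t-mn+n$.

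To obtain the right-hand side $\binom{mn-1-s}{s}$ and the remaining factor $\binom{n-1-s+t}{s-t}$, I would specialize the free parameters $T$ and $D$ cleverly rather than tracking the full polynomial identity in two variables. The standard device here is to set $D=1$ (or $-D=1$) and read off the coefficient of a chosen power of $T$: since $y_{mn-1}(T,1)=\sum_s \binom{mn-1-s}{s}T^{mn-1-2s}(-1)^s$, the coefficient of $T^{mn-1-2s}$ on the left is exactly $\pm\binom{mn-1-s}{s}$, which is the target. Matching this against the coefficient of the same power of $T$ on the expanded right-hand side forces the summation indices to satisfy the constraints encoded in $\binom{n-1-s+t}{s-t}$, and collecting all surviving terms yields \eqref{mns}. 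I expect the main obstacle to be purely bookkeeping: correctly aligning the exponents of $T$ and powers of $2$ after the two nested binomial expansions, and in particular verifying that the delta-correction term is handled consistently so that the odd-$m$ central term is neither double-counted nor dropped. Getting the index ranges so that exactly the nonnegative-binomial terms survive — matching the stated condition that $i,j,k,t$ range over values keeping all entries nonnegative — will require care, but it should follow mechanically once the coefficient extraction in $T$ is set up.
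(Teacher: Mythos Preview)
Your overall framework—apply Theorem~\ref{t1} to both sides of $(A^m)^n=A^{mn}$, compare $(1,2)$-entries, and extract coefficients—matches the paper. But two steps do not go through as you describe. First, the $(1,2)$-entry of $B^n$ with $B=A^m$ is $b_B\,y_{n-1}(T_B,D_B)$ where $b_B=b\,y_{m-1}(T,D)$, not $b\,y_{n-1}(T_B,D_B)$; so the scalar identity is $y_{mn-1}=y_{m-1}\cdot y_{n-1}(T_B,D_B)$, and the extra factor $y_{m-1}$ (or $y_{n-1}$, in the paper's ordering) is precisely what produces the binomial $\binom{n-1-s+t}{s-t}$. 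That is easily repaired. The more serious gap is in how you propose to expand the trace. Writing $T_B=2y_m-Ty_{m-1}$ as a polynomial in $T,D$ and then raising to a power by the multinomial theorem will \emph{not} produce the binomials $\binom{n(m-1-2(i+k))}{2j}$, $\binom{j}{t-n(i+k)}$, the factor $2^{1+2t-mn+n}$, or the Kronecker delta. Those quantities are the fingerprint of a very specific expansion that your plan never reaches.

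What the paper actually does is specialize \emph{from the outset} to $A=\left(\begin{smallmatrix}1&1\\x&0\end{smallmatrix}\right)$, so that $T=1$, $D=-x$, and the eigenvalues are $(1\pm\sqrt{1+4x})/2$. Then the trace of $A^n$ has the closed form $T_n=\bigl(\tfrac{1+\sqrt{1+4x}}{2}\bigr)^n+\bigl(\tfrac{1-\sqrt{1+4x}}{2}\bigr)^n$, and it is the expansion of $T_n^{\,r}$ via this closed form—pairing terms $k\leftrightarrow r-k$ in the binomial expansion of $(\alpha^n+\beta^n)^r$, pulling out $(\alpha\beta)^{nk}=(-x)^{nk}$, and then expanding $(1+\sqrt{1+4x})^{n(r-2k)}+(1-\sqrt{1+4x})^{n(r-2k)}$—that produces $\binom{r}{k}$, $\binom{n(r-2k)}{2j}$, $\binom{j}{\cdot}$, the powers of $2$, and the delta (which halves the middle term of the pairing when $r$ is even). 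With $r=m-1-2i$ coming from the outer sum $y_{m-1}^{(n)}=\sum_i\binom{m-1-i}{i}T_n^{m-1-2i}(-D_n)^i$, one recovers exactly the five binomials of the theorem, and comparing coefficients of $x^s$ gives the result. Your proposed specialization $D=1$ with coefficient extraction in $T$ leads to a Chebyshev-type situation and a different identity; to obtain \eqref{mns} as stated you must take $T=1$ and work in powers of $x=-D$.
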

\begin{proof}
Let
\[
A=\left (
\begin{matrix}
1&  1\\
x&0
\end{matrix}
\right ).
\]
From Theorem \ref{t1} and \eqref{yrecur} we have that
\begin{equation}\label{deteq}
A^{n}
=\left (
\begin{matrix}
y_{n} &  \,y_{n-1} \\
x\, y_{n-1}& y_{n}-\, y_{n-1}
\end{matrix}
\right )
=\left (
\begin{matrix}
y_{n} &  \,y_{n-1} \\
x\, y_{n-1}& x\, y_{n-2}
\end{matrix}
\right ),
\end{equation}
with
\begin{equation}\label{yeq}
y_{k}= \sum_{i=0}^{\lfloor k/2 \rfloor}\binom{k-i}{i}x^{i} = y_{k-1}+x\,y_{k-2}.
\end{equation}
Let $T_{n}$ denote the trace of $A^{n}$ and $D_{n}$ the determinant
of $A^{n}$ (so $D_{n}=(-x)^{n}$).  From \eqref{deteq}
 we have that
\begin{equation}\label{Teq}
T_{n} = y_{n}+x\,y_{n-2}.
\end{equation}
Thus the sequence $\{T_{n}\}$ satisfies the same recurrence relation as
the sequence $\{y_{n}\}$, namely
\[
T_{n+1}=T_{n}+xT_{n-1}.
\]
This leads to the explicit formula
\begin{align}\label{tneq}
T_{n}&= \left (\frac{1+\sqrt{1+4 x}}{2} \right )^{n} + \left (\frac{1-\sqrt{1+4 x}}{2} \right )^{n}\\
&=\frac{1}{2^{n+1}}
\sum_{j=0}^{\lfloor n/2 \rfloor}\sum_{k=j}^{\lfloor n/2 \rfloor}
\binom{n}{2k} \binom{k}{j}4^{j}x^{j}. \notag
\end{align}
After some straightforward but tedious calculations, we derive from the
first of these equalities, for integral $r \geq 0$, that
\begin{align}\label{tnreq}
T_{n}^{r}=
\sum_{s=0}^{\lfloor nr/2 \rfloor}\sum_{k=0}^{\lfloor r/2 \rfloor}
\sum_{i=0}^{\lfloor n(r-2k)/2 \rfloor}
\binom{r}{k} \binom{n(r-2k)}{2i}\binom{i}{s-nk}2^{1+2t-rn}
\frac{(-1)^{nk}}{1+\delta_{r/2,\,k}}x^{s}.
\end{align}
As usual,
\[
\delta_{p,\,q} =
\begin{cases}
1,&p=q,\\
0,& p \not = q.
\end{cases}
\]
For integral $j \geq 0$ define
\begin{equation}\label{jneq}
y_{j}^{(n)}= \sum_{i=0}^{\lfloor j/2 \rfloor}\binom{j-i}{i}T_{n}^{j-2i}(-D_{n})^{i}.
\end{equation}
Then Theorem \ref{t1} and the trivial identity $A^{mn} = (A^{n})^{m}$
 give that
\begin{align*}
\left (
\begin{matrix}
y_{mn} &  \,y_{mn-1} \\
x\, y_{mn-1}& xy_{mn-2}
\end{matrix}
\right )
&=
\left (
\begin{matrix}
y_{n} &  \,y_{n-1} \\
x\, y_{n-1}& xy_{n-2}
\end{matrix}
\right )^{m}\\
&=
\left (
\begin{matrix}
y_{m}^{(n)} -xy_{n-2}y_{m-1}^{(n)}&  \,y_{n-1}y_{m-1}^{(n)} \\
x\, y_{n-1}y_{m-1}^{(n)}& y_{m}^{(n)} -y_{n-1}y_{m-1}^{(n)}
\end{matrix}
\right ).
\end{align*}
If we compare $(1,2)$ entries of the first and last matrices, we have that
\begin{equation*}
y_{mn-1}=y_{n-1}y_{m-1}^{(n)}.
\end{equation*}
Upon combining \eqref{jneq},  \eqref{tnreq} and \eqref{yeq}, we get
that
\begin{multline*}
\sum_{s=0}^{\lfloor (mn-1)/2 \rfloor}\binom{mn-1-s}{s}x^{s}\\
=
\sum_{s=0}^{\lfloor (mn-1)/2 \rfloor}
\sum_{i,j,k,t}2^{1+2t-mn+n}
\frac{(-1)^{nk+i(n+1)}}{1+\delta_{(m-1)/2,\,i+k}}
\binom{m-1-i}{i}
\binom{m-1-2i}{k}\\
\times \binom{n(m-1-2(i+k))}{2j}\binom{j}{t-n(i+k)}
\binom{n-1-s+t}{s-t}
x^{s}.
\end{multline*}
Here $i$, $j$, $k$, and $t$ run through all sets of integers which keep all
binomial entries non-negative. The result now follows upon comparing
coefficients of like powers of $x$.
\end{proof}

Upon comparing like powers of $x$ on each side of \eqref{Teq}, using \eqref{yeq}
 and \eqref{tneq},
 we get the following.
\begin{corollary}
Let $n$ be a positive integer. Then for each integer $s$, $0 \leq s \leq \lfloor n/2 \rfloor$,
\begin{equation*}
\frac{1}{2^{n-2s-1}}\sum_{j=s}^{ \lfloor n/2 \rfloor} \binom{n}{2j}\binom{j}{s}
=\frac{n}{n-s} \binom{n-s}{s}.
\end{equation*}
\end{corollary}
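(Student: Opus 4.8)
The plan is to read off the coefficient of $x^{s}$ from the two expressions we already have for the trace $T_{n}$---the recurrence-based identity \eqref{Teq} and the closed form \eqref{tneq}---and then equate them. Since \eqref{tneq} presents $T_{n}$ as an explicit power series in $x$, extracting $[x^{s}]$ there is immediate: the power $x^{s}$ arises only from the term with outer index equal to $s$, leaving $\sum_{j=s}^{\lfloor n/2 \rfloor}\binom{n}{2j}\binom{j}{s}$ (after renaming the inner summation variable). Collecting the accompanying powers of two, via $4^{s}=2^{2s}$, turns the prefactor into $1/2^{\,n-2s-1}$, so this side produces exactly the left-hand member of the asserted identity.

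For the right-hand member I would expand the other expression for $T_{n}$, namely $y_{n}+x\,y_{n-2}$ from \eqref{Teq}, as a power series in $x$ using \eqref{yeq}. The summand $y_{n}$ contributes $\binom{n-s}{s}x^{s}$, while $x\,y_{n-2}$ contributes, after the index shift $i\mapsto i+1$, the term $\binom{\,n-2-(s-1)}{s-1}x^{s}=\binom{n-1-s}{s-1}x^{s}$. Hence the coefficient of $x^{s}$ on this side is $\binom{n-s}{s}+\binom{n-1-s}{s-1}$.

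It then remains only to simplify this sum to the claimed closed form. Writing both binomial coefficients over the common factorial $\tfrac{(n-1-s)!}{s!\,(n-2s)!}$, one has $\binom{n-s}{s}=(n-s)\tfrac{(n-1-s)!}{s!\,(n-2s)!}$ and $\binom{n-1-s}{s-1}=s\,\tfrac{(n-1-s)!}{s!\,(n-2s)!}$, so their sum is $n\,\tfrac{(n-1-s)!}{s!\,(n-2s)!}=\frac{n}{n-s}\binom{n-s}{s}$. Equating the two coefficients of $x^{s}$ gives the corollary.

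I expect no genuine obstacle; the only points requiring care are the index shift used to extract $[x^{s}]$ from $x\,y_{n-2}$, and the boundary behaviour. At $s=0$ the term $x\,y_{n-2}$ contributes nothing and $\binom{n-1-s}{s-1}$ vanishes by convention, while at $s=\lfloor n/2\rfloor$ a short separate check in the two parities of $n$ confirms that $\binom{n-s}{s}+\binom{n-1-s}{s-1}$ still equals $\frac{n}{n-s}\binom{n-s}{s}$, so the formula holds throughout the stated range.
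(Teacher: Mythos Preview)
Your proof is correct and follows exactly the approach the paper indicates: the paper merely states that the corollary is obtained ``upon comparing like powers of $x$ on each side of \eqref{Teq}, using \eqref{yeq} and \eqref{tneq}'', and you have supplied precisely those details, including the simplification $\binom{n-s}{s}+\binom{n-1-s}{s-1}=\frac{n}{n-s}\binom{n-s}{s}$.
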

This identity is also found in \cite{BEW99} (page 442) and \cite{G72}
(formula 3.120).

\section{A Proof of an Identity for Specially Multiplicative Functions}

An arithmetical function $f$ is said to be \emph{multiplicative} if $f(1)=1$ and
\begin{equation}\label{feq}
f(mn) = f(m)f(n),
\end{equation}
whenever $(m,n)=1$. If \eqref{feq} holds for all $m$ and $n$, then $f$ is said to
be \emph{completely multiplicative}.
A multiplicative function $f$ is said to be
\emph{specially multiplicative} if there is a completely multiplicative
function $f_{A}$ such that
\begin{equation*}
f(m)f(n) = \sum_{d|(m,n)}f \left ( \frac{mn}{d^{2}} \right ) f_{A}(d)
\end{equation*}
for all $m$ and $n$. An alternative characterization of specially
multiplicative functions is given below (see \cite{H03}, for
example):

If $f$ is multiplicative and for each prime $p$ there is a
complex number $g(p)$ such that
\begin{equation}\label{geq}
f(p^{n+1}) = f(p)f(p^{n})- g(p)f(p^{n-1}), \hspace{15pt} n \geq 1,
\end{equation}
then $f$ is specially multiplicative.
(In this case, $f_{A}(p)=g(p)$, for all primes $p$).

We give an alternative proof of the following
known result (also see \cite{H03}, for example).
\begin{proposition}
Let $f$ and $g$ be as at \eqref{geq}. Then for $k \geq 0$ and all primes $p$,
\begin{equation*}
f(p^{k})=\sum_{j=0}^{\lfloor k/2 \rfloor}(-1)^{j} \binom{k-j}{j}f(p)^{k-2j}g(p)^{j}.
\end{equation*}
\end{proposition}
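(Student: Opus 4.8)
The plan is to recognize the defining recurrence \eqref{geq} as an instance of the recurrence \eqref{yrecur} satisfied by the auxiliary sequence $y_n$ of Theorem \ref{t1}, and then to read off the closed form directly from the explicit formula for $y_n$. Fix a prime $p$ and consider the matrix
\[
A=\left(\begin{matrix} f(p) & -g(p)\\ 1 & 0\end{matrix}\right),
\]
whose trace is $T=f(p)$ and whose determinant is $D=g(p)$. For this choice, the sequence $y_n=\sum_{i=0}^{\lfloor n/2\rfloor}\binom{n-i}{i}T^{n-2i}(-D)^i$ of Theorem \ref{t1} satisfies, by \eqref{yrecur},
\[
y_{k+1}=T\,y_k-D\,y_{k-1}=f(p)\,y_k-g(p)\,y_{k-1},
\]
which is precisely the recurrence \eqref{geq} governing the values $f(p^k)$.

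Next I would compare initial conditions. From the explicit formula one has $y_0=1$ and $y_1=T=f(p)$, while $f(p^0)=f(1)=1$ (since $f$ is multiplicative) and $f(p^1)=f(p)$. Thus the two sequences $\{y_k\}_{k\geq 0}$ and $\{f(p^k)\}_{k\geq 0}$ agree for $k=0,1$ and obey the same second-order linear recurrence for $k\geq 1$; a routine induction on $k$ then gives $f(p^k)=y_k$ for every $k\geq 0$.

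Finally, substituting $T=f(p)$ and $-D=-g(p)$ into the closed form for $y_k$ yields
\[
f(p^k)=y_k=\sum_{i=0}^{\lfloor k/2\rfloor}\binom{k-i}{i}f(p)^{k-2i}(-g(p))^i
=\sum_{j=0}^{\lfloor k/2\rfloor}(-1)^j\binom{k-j}{j}f(p)^{k-2j}g(p)^j,
\]
which is the asserted identity after renaming the summation index. I do not expect any genuine obstacle: the whole argument rests on the single observation that \eqref{geq} is the $y_n$-recurrence with $T=f(p)$ and $D=g(p)$, and the only points requiring care are verifying the two base cases and tracking the sign through $(-D)^i=(-1)^i g(p)^i$.
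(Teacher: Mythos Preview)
Your proof is correct. Both your approach and the paper's rest on the same observation---that the recurrence \eqref{geq} coincides with \eqref{yrecur} when $T=f(p)$ and $D=g(p)$---but the executions differ. The paper encodes the recurrence as a matrix identity, iterates it to express a $2\times2$ array of values $f(p^{j})$ in terms of a power of $\left(\begin{smallmatrix} f(p)&1\\ -g(p)&0\end{smallmatrix}\right)$, and then invokes the full matrix formula of Theorem~\ref{t1} to read off the $(1,1)$ entry. You bypass the matrix algebra entirely: once the scalar recurrence and the two initial values $y_{0}=f(p^{0})=1$, $y_{1}=f(p^{1})=f(p)$ are seen to agree, a one-line induction gives $f(p^{k})=y_{k}$, and the closed form for $y_{k}$ finishes. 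Your route is more economical; the paper's stays closer to its organizing theme of deriving identities by comparing entries of matrix powers. (Incidentally, the matrix $A$ you introduce is never actually used---you only need the pair $(T,D)=(f(p),g(p))$ together with \eqref{yrecur} and the explicit formula for $y_{n}$.)
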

\begin{proof}
Clearly we can assume $k \geq 3$.
Equation \ref{geq} implies that
{\allowdisplaybreaks
\begin{align}\label{spmuleq}
\left (
\begin{matrix}
f(p^{k})  &f(p^{k-1})   \\
 f(p^{k-1})  & f(p^{k-2})
\end{matrix}
\right )
&=
\left (
\begin{matrix}
f(p^{k-1})  &f(p^{k-2})   \\
 f(p^{k-2})  & f(p^{k-3})
\end{matrix}
\right )
\left (
\begin{matrix}
f(p)  &1   \\
-g(p)  & 0\end{matrix}
\right )\\
&=
\left (
\begin{matrix}
f(p^{2})  &f(p)   \\
 f(p)  & 1\end{matrix}
\right )
\left (
\begin{matrix}
f(p)  &1   \\
-g(p)  & 0\end{matrix}
\right )^{k-2} \notag \\
&=
\left (
\begin{matrix}
f(p)^2-g(p)  &f(p)   \\
 f(p)  & 1\end{matrix}
\right )
\left (
\begin{matrix}
f(p)  &1   \\
-g(p)  & 0\end{matrix}
\right )^{k-2}\notag \\
&=
\left (
\left (
\begin{matrix}
0 &0   \\
1+g(p)  & 0\end{matrix}
\right )+
\left (
\begin{matrix}
f(p)  &1   \\
-g(p)  & 0\end{matrix}
\right )
\right )
\left (
\begin{matrix}
f(p)  &1   \\
-g(p)  & 0\end{matrix}
\right )^{k-1}.\notag
\end{align}
}
The result now follows immediately from Theorem \ref{t1}, upon comparing
$(1,1)$ entries on each side.
\end{proof}
Remark: The Ramanujan $\tau$ function is specially multiplicative with
$g(p)$   $=$   $p^{11}$.
We note in passing that the $\tau$ Conjecture for $p$ prime,
namely that $|\tau(p)|<2p^{11/2}$, is equivalent to the conjecture that
$\lim_{k \to \infty} \tau(p^{k})/\tau(p^{k-1})$ does not exist. This follows
from \eqref{spmuleq}, the correspondence between
matrices and continued fractions and   Worpitzky's Theorem for
continued fractions.

\section{A Recurrence Formula for the Generalized Fibonacci Polynomials }

The Fibonacci polynomials $\{f_{m}(x,s)\}_{m=0}^{\infty}$ are defined by
$f_{0}(x,s)=0$, $f_{1}(x,s)=1$ and $f_{n+1}(x,s)=xf_{n}(x,s)+sf_{n-1}(x,s)$,
for $n \geq 1$. They are given explicitly by the formula
\[
f_{m}(x,s)= \sum_{k=0}^{\lfloor (m-1)/2 \rfloor}\binom{m-k-1}{k}x^{m-2k-1}s^{k}.
\]
It is clear from Theorem \ref{t1}  that the $f_{n}(x,s)$ satisfy
\begin{align*}
\left (
\begin{matrix}
 x&  1\\
s&0
\end{matrix}
\right )^{m}
&=
\left (
\begin{matrix}
f_{m+1}(x,s) &f_{m}(x,s) \\
s f_{m}(x,s)& f_{m+1}(x,s) -x f_{m}(x,s)
\end{matrix}
\right )\\
&=
\left (
\begin{matrix}
f_{m+1}(x,s) &f_{m}(x,s) \\
s f_{m}(x,s)& s f_{m-1}(x,s)
\end{matrix}
\right ).
\end{align*}
We can now use the trivial identity $A^{m\,n}=(A^{m})^{n}$ applied to the  matrix
$\left (
\begin{matrix}
 x&  1\\
s&0
\end{matrix}
\right )$, together
with Theorem \ref{t1} applied to the $(1,2)$-entries on each side to get the
following functional equation for the Fibonacci polynomials.
\begin{corollary}
Let $f_{i}(x,s)$ denote the $i$-th Fibonacci polynomial and let $m$ and $n$
be positive integers. Then
\begin{align*}
&f_{mn}(x,s)\\
&=f_{m}(x,s)\sum_{k=0}^{\lfloor \frac{n-1}{2} \rfloor}\binom{n-k-1}{k}
\big [f_{m+1}(x,s)+sf_{m-1}(x,s) \big ]^{n-2k-1}(-(-s)^{m})^{k}\\
&=f_{m}(x,s)\times
 f_{n}\left (\,f_{m+1}(x,s)+sf_{m-1}(x,s), \,-(-s)^{m}\right) .
\end{align*}
\end{corollary}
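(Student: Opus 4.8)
The plan is to apply the identity $A^{mn}=(A^m)^n$ to the matrix $A=\left(\begin{smallmatrix} x & 1\\ s & 0\end{smallmatrix}\right)$ and then compare the $(1,2)$-entries of the two sides, exactly as the computation preceding the statement sets up. On the left, the displayed formula for the $m$-th power of this matrix (with $m$ replaced by $mn$) shows that the $(1,2)$-entry of $A^{mn}$ is simply $f_{mn}(x,s)$. On the right, I would set $B=A^m$ and apply Theorem~\ref{t1} to $B$ itself; the substance of the argument is then to identify the entries, trace, and determinant of $B$ in terms of the Fibonacci polynomials.

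First I would record, from the displayed formula for $A^m$, that $B=A^m$ has entries $a=f_{m+1}(x,s)$, $b=f_m(x,s)$, $c=sf_m(x,s)$, and $d=sf_{m-1}(x,s)$. Hence its trace is $T=f_{m+1}(x,s)+sf_{m-1}(x,s)$, and its determinant is $D=(\det A)^m=(-s)^m$, since $\det A=-s$. Applying Theorem~\ref{t1} to $B$, the $(1,2)$-entry of $B^n$ equals $b\,Y_{n-1}$, where
\[
Y_{n-1}=\sum_{k=0}^{\lfloor (n-1)/2\rfloor}\binom{n-1-k}{k}T^{\,n-1-2k}(-D)^{k}
\]
is the auxiliary sum of Theorem~\ref{t1} with $n$ there replaced by $n-1$.

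The key recognition step is that this sum is precisely the explicit formula for $f_n$ evaluated at $(T,-D)$: matching it against $f_n(u,v)=\sum_{k=0}^{\lfloor (n-1)/2\rfloor}\binom{n-k-1}{k}u^{\,n-2k-1}v^{k}$, one reads off $Y_{n-1}=f_n\!\left(f_{m+1}(x,s)+sf_{m-1}(x,s),\,-(-s)^m\right)$. Since $b=f_m(x,s)$, equating the $(1,2)$-entries of $A^{mn}$ and $(A^m)^n$ then gives $f_{mn}(x,s)=f_m(x,s)\,Y_{n-1}=f_m(x,s)\,f_n\!\left(f_{m+1}(x,s)+sf_{m-1}(x,s),\,-(-s)^m\right)$, which is the claim; the middle line of the statement is just $Y_{n-1}$ spelled out. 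The only genuine obstacle is bookkeeping: one must keep straight that the $(1,2)$-entry in Theorem~\ref{t1} involves the auxiliary sum at index $n-1$ rather than $n$ (so that $Y_{n-1}$, whose top binomial argument is $n-1-k$, yields the correct polynomial $f_n$), and confirm $\det A=-s$ so that $-D=-(-s)^m$ matches the second argument of $f_n$. No analytic or convergence input is required; the identity is a purely formal consequence of the associativity of matrix multiplication.
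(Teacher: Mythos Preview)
Your proposal is correct and follows exactly the approach the paper indicates in the paragraph preceding the corollary: apply $A^{mn}=(A^m)^n$ to $A=\left(\begin{smallmatrix} x & 1\\ s & 0\end{smallmatrix}\right)$ and compare $(1,2)$-entries via Theorem~\ref{t1}. Your identification of the trace $T=f_{m+1}+sf_{m-1}$, the determinant $D=(-s)^m$, and the recognition that the auxiliary sum $Y_{n-1}$ of Theorem~\ref{t1} is precisely $f_n(T,-D)$ are the details the paper leaves implicit, and you have carried them out accurately.
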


\section{A Polynomial Identity of Bhatwadekar and Roy}
In \cite{S93} Sury gave a proof of the following polynomial
identity, which he attributes to
Bhatwadekar and Roy \cite{BR91}:
\begin{corollary}
For every positive integer $n$ and all $x$,
\[
\sum_{i=0}^{\lfloor n/2 \rfloor} (-1)^i \binom{n-i}{ i} x^i (1+x)^{n-2i}
= 1 + x + \cdots + x^n.
\]
\end{corollary}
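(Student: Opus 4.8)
The plan is to exploit the same matrix-power machinery used throughout the paper, now applied to the specific matrix
\[
A=\begin{pmatrix} 1 & 1 \\ x & 0 \end{pmatrix},
\]
which has trace $T=1$ and determinant $D=-x$. The idea is that the Bhatwadekar--Roy sum is exactly $y_n$ from Theorem~\ref{t1} evaluated at this matrix, up to a substitution. First I would compute $y_n$ directly: with $T=1$ and $-D=x$ we have
\[
y_n=\sum_{i=0}^{\lfloor n/2\rfloor}\binom{n-i}{i}x^{i},
\]
which is the generating polynomial already appearing at \eqref{yeq}. This does not yet match the alternating sum in the statement, so the key observation is that the target sum has the structure of $y_n$ but with $x$ replaced by $-x(1+x)$ and a global factor of $(1+x)$ extracted.

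The cleanest route is to interpret the left-hand side as a $y_n$-type quantity and then recognize the right-hand side telescopically. Concretely, I would note that $1+x+\cdots+x^n=(x^{n+1}-1)/(x-1)$, so it suffices to show the left side equals this rational function. To connect the two, I would consider the matrix $B$ whose characteristic polynomial has trace $1+x$ and determinant $x$ (for instance a companion-type matrix), so that by Theorem~\ref{t1} the relevant $y_n$ becomes
\[
\sum_{i=0}^{\lfloor n/2\rfloor}(-1)^i\binom{n-i}{i}x^i(1+x)^{n-2i}.
\]
This is precisely the left-hand side. The eigenvalues of such a $B$ are the roots of $\lambda^2-(1+x)\lambda+x=0$, namely $\lambda=1$ and $\lambda=x$. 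Using the Binet-type closed form for $y_n$ in terms of eigenvalues,
\[
y_n=\frac{\lambda_1^{\,n+1}-\lambda_2^{\,n+1}}{\lambda_1-\lambda_2},
\]
substituting $\lambda_1=x$, $\lambda_2=1$ gives exactly $(x^{n+1}-1)/(x-1)=1+x+\cdots+x^n$, which is the right-hand side.

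The main steps, in order, are: (1) identify the left-hand sum as the explicit form of $y_n$ for a matrix with trace $1+x$ and determinant $x$; (2) recall from the derivation of \eqref{tneq} that such a $y_n$ admits the eigenvalue (Binet) closed form; (3) factor the characteristic polynomial $\lambda^2-(1+x)\lambda+x=(\lambda-1)(\lambda-x)$ to read off the eigenvalues $1$ and $x$; and (4) substitute into the Binet formula and simplify the resulting geometric-type quotient to $1+x+\cdots+x^n$.

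The main obstacle I anticipate is step~(1): verifying rigorously that the alternating sum $\sum_i(-1)^i\binom{n-i}{i}x^i(1+x)^{n-2i}$ is genuinely the $y_n$ associated with trace $1+x$ and determinant $x$, rather than merely resembling it. This requires matching $T^{n-2i}(-D)^i$ with $(1+x)^{n-2i}x^i$ and tracking the sign $(-1)^i$ correctly, since here $-D=-x$ forces the alternating sign, whereas in \eqref{yeq} the sign was absorbed. Once that identification is pinned down, the eigenvalue factorization is clean because the characteristic polynomial splits over $\mathbb{Z}[x]$, and the telescoping to the geometric sum is immediate.
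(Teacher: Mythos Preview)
Your approach is correct and lands on the same key identification as the paper: the left-hand side is precisely the $y_n$ of Theorem~\ref{t1} for a matrix with trace $T=1+x$ and determinant $D=x$. (Your worry about step~(1) is unfounded: with $T=1+x$ and $-D=-x$, the definition $y_n=\sum_i\binom{n-i}{i}T^{n-2i}(-D)^i$ gives exactly $\sum_i(-1)^i\binom{n-i}{i}x^i(1+x)^{n-2i}$, so the sign tracking is immediate.) Where you diverge from the paper is in the second step. The paper takes the concrete matrix $\left(\begin{smallmatrix}1+x & 1\\ -x & 0\end{smallmatrix}\right)$ and verifies by induction that its $n$-th power equals $\frac{1}{1-x}\left(\begin{smallmatrix}1-x^{n+1} & 1-x^n\\ -x(1-x^n) & -x(1-x^{n-1})\end{smallmatrix}\right)$, then reads off the $(1,1)$ entry (noting $d=0$) to get $y_n=(1-x^{n+1})/(1-x)$. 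You instead factor the characteristic polynomial $\lambda^2-(1+x)\lambda+x=(\lambda-1)(\lambda-x)$ and invoke the Binet form $y_n=(\lambda_1^{\,n+1}-\lambda_2^{\,n+1})/(\lambda_1-\lambda_2)$ to reach the same conclusion. Your route is arguably cleaner, since the factorization over $\mathbb{Z}[x]$ is transparent and avoids a separate induction; the paper's route has the virtue of staying entirely within matrix identities and Theorem~\ref{t1} without appealing to a closed form for $y_n$ not derived in the paper. One small gap to patch: the Binet formula degenerates when $\lambda_1=\lambda_2$, i.e.\ at $x=1$; you should remark that the identity is between polynomials in $x$, so validity for $x\neq 1$ suffices.
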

\begin{proof}
Clearly we can assume $n \geq 2$. One easily checks by induction that, for
$n \geq 2$,
{\allowdisplaybreaks
\begin{align*}
\frac{1}{1-x}
\left (
\begin{matrix}
 1-x^{n+1} &  1-x^{n} \\
 -x(1-x^{n})  & -x(1-x^{n-1})
\end{matrix}
\right )
&=
\left (
\begin{matrix}
1+x  &1   \\
-x  & 0\end{matrix}
\right )^{n}.
\end{align*}
}
The result is now immediate from Theorem \ref{t1}.
\end{proof}

\section{ Other Elementary Identities}

If we replace $n$ by $n+1$  in Equation \ref{deteq} and
take the determinant of the first and last matrices, we get
\[
(-x)^{n+1} = x( y_{n+1}y_{n-1}-y_{n}^{2}).
\]
Upon comparing coefficients of $x^{s}$, for $0 \leq s \leq n-1$ on each side, we
get the following identity.
\begin{corollary}
Let $n$ be a positive integer. If $s$ is an integer, $0 \leq s \leq n-1$, then
\begin{equation}
\sum_{j\geq 0} \binom{n-s+j}{s-j}\binom{n-j}{j}=
\sum_{j\geq 0} \binom{n+1-s+j}{s-j}\binom{n-1-j}{j}.
\end{equation}
\end{corollary}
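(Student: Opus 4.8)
The plan is to complete the coefficient comparison that the paragraph preceding the statement sets up. First I would take the matrix $A$ of \eqref{deteq}, observe that $\det A=-x$, and apply Theorem~\ref{t1} with $n$ replaced by $n+1$, so that
\[
A^{n+1}=\left(
\begin{matrix}
y_{n+1} & y_{n}\\
x\,y_{n} & x\,y_{n-1}
\end{matrix}
\right).
\]
Equating $\det(A^{n+1})=(\det A)^{n+1}=(-x)^{n+1}$ with the determinant of the displayed matrix, which is $x\bigl(y_{n+1}y_{n-1}-y_{n}^{2}\bigr)$, reproduces the relation $(-x)^{n+1}=x\bigl(y_{n+1}y_{n-1}-y_{n}^{2}\bigr)$ already recorded in the text. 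Dividing through by $x$ puts this in the cleaner Cassini-type form $y_{n+1}y_{n-1}-y_{n}^{2}=(-1)^{n+1}x^{n}$, which is the object I would actually read coefficients off of.

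Next I would expand each $y_{k}$ as the explicit polynomial in \eqref{yeq}, namely $y_{k}=\sum_{i}\binom{k-i}{i}x^{i}$ (the binomials vanish automatically once $i$ exceeds $\lfloor k/2\rfloor$), and form the two Cauchy products. Writing $[x^{s}]P$ for the coefficient of $x^{s}$ in $P$ and reindexing via $j$, one gets
\[
[x^{s}]\,y_{n}^{2}=\sum_{j\ge 0}\binom{n-s+j}{s-j}\binom{n-j}{j},
\qquad
[x^{s}]\,y_{n+1}y_{n-1}=\sum_{j\ge 0}\binom{n+1-s+j}{s-j}\binom{n-1-j}{j},
\]
where $j$ ranges over the integers keeping all entries non-negative. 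These are exactly the left- and right-hand sides of the claimed identity.

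The key observation that closes the argument is that the polynomial $y_{n+1}y_{n-1}-y_{n}^{2}$ equals the single monomial $(-1)^{n+1}x^{n}$, so its coefficient of $x^{s}$ vanishes for every $s\neq n$. Reading off this vanishing in the range $0\le s\le n-1$ gives $[x^{s}]\,y_{n+1}y_{n-1}=[x^{s}]\,y_{n}^{2}$, which is precisely the asserted equality of the two sums. I do not expect a serious obstacle here: the whole content is the two routine expansions plus a careful reindexing. The only points needing attention are (i) writing the summation ranges in the two Cauchy products so that exactly the binomials appearing in the statement survive, and (ii) performing the division by $x$ first so that the comparison is of coefficients of $x^{s}$ for $0\le s\le n-1$ (rather than of $x^{s+1}$ in the undivided relation), which is what makes the two stated sums line up with the same index $s$.
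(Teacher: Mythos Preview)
Your proposal is correct and follows exactly the approach the paper sketches in the paragraph before the corollary: take determinants in \eqref{deteq} with $n+1$ in place of $n$, expand the Cassini-type relation $y_{n+1}y_{n-1}-y_n^2=(-1)^{n+1}x^n$ via the Cauchy products of the polynomials in \eqref{yeq}, and compare coefficients of $x^s$ for $0\le s\le n-1$. The only difference is cosmetic---you divide by $x$ before reading off coefficients, whereas the paper works directly with $(-x)^{n+1}=x(y_{n+1}y_{n-1}-y_n^2)$---and you spell out the Cauchy-product reindexing that the paper leaves implicit.
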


Once again we start with the matrix $A=\left (
\begin{smallmatrix}
1&  1\\
x&0
\end{smallmatrix}
\right )$ and then consider the identity $A^{mn}=\left(A^{m}\right)^{n}=\left(A^{n}\right)^{m}$
for small values of $m$.
\begin{corollary}
Let $n$ be a positive integer and let $s$ be an integer, $0 \leq s \leq n-1$. Then
\begin{multline}\label{pwr2eq}
\sum_{i\geq 0}
\binom{n-i-1}{i}\binom{n-2i-1}{s-2i}2^{s-2i}(-1)^{i}\\
=
 \sum_{i= 0}^{\lfloor n/2 \rfloor}
\frac{n}{n-i}\binom{n-i}{i}\binom{n+i-s-1}{s-i}
=\binom{2n-s-1}{s}.
\end{multline}
\end{corollary}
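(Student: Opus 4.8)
The plan is to specialize the matrix identity $A^{mn}=(A^{m})^{n}=(A^{n})^{m}$ to the case $m=2$, with $A=\left(\begin{smallmatrix}1&1\\x&0\end{smallmatrix}\right)$, and to compare the $(1,2)$-entries of the three matrices $A^{2n}$, $(A^{2})^{n}$ and $(A^{n})^{2}$. By \eqref{deteq} the $(1,2)$-entry of $A^{2n}$ is $y_{2n-1}$, and by \eqref{yeq} the coefficient of $x^{s}$ in $y_{2n-1}$ is $\binom{2n-1-s}{s}$; this produces the right-hand side of \eqref{pwr2eq}.

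For the first (left-hand) sum I would compute $A^{2}=\left(\begin{smallmatrix}1+x&1\\x&x\end{smallmatrix}\right)$, which has trace $1+2x$ and determinant $x^{2}$. Applying Theorem \ref{t1} to $A^{2}$ (so that $T=1+2x$ and $-D=-x^{2}$) shows that the $(1,2)$-entry of $(A^{2})^{n}$ is $z_{n-1}$, where $z_{k}=\sum_{i}\binom{k-i}{i}(1+2x)^{k-2i}(-x^{2})^{i}$. Expanding each factor $(1+2x)^{n-1-2i}$ by the binomial theorem and reading off the coefficient of $x^{s}$ then gives precisely $\sum_{i\geq0}\binom{n-i-1}{i}\binom{n-2i-1}{s-2i}2^{s-2i}(-1)^{i}$.

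For the middle sum I would use the $(1,2)$-entry of $(A^{n})^{2}$. Since $A^{n}$ has $b$-entry $y_{n-1}$ and trace $T_{n}$, direct multiplication (equivalently, Theorem \ref{t1} applied to $A^{n}$) gives this entry as $y_{n-1}T_{n}$. The coefficient of $x^{i}$ in $T_{n}$ is $\frac{n}{n-i}\binom{n-i}{i}$: from \eqref{Teq} and \eqref{yeq} it equals $\binom{n-i}{i}+\binom{n-1-i}{i-1}$, which simplifies to $\frac{n}{n-i}\binom{n-i}{i}$ (this is the content of the corollary in Section 2). Forming the Cauchy product with $y_{n-1}=\sum_{k}\binom{n-1-k}{k}x^{k}$ and extracting the coefficient of $x^{s}$ yields $\sum_{i}\frac{n}{n-i}\binom{n-i}{i}\binom{n+i-s-1}{s-i}$. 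Comparing the coefficients of $x^{s}$ in the three $(1,2)$-entries, all equal to $\binom{2n-1-s}{s}$, gives the full chain of equalities in \eqref{pwr2eq}.

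The steps are essentially mechanical, so I anticipate no genuine obstacle; the only point demanding care is the coefficient extraction for the left-hand side, where the two sources of powers of $x$ (the $x^{2i}$ coming from $(-x^{2})^{i}$ and the $x^{s-2i}$ coming from the expansion of $(1+2x)^{n-1-2i}$) must be combined so that the factor $2^{s-2i}$ and the constraint $s-2i\geq0$ emerge correctly. Recognizing $\frac{n}{n-i}\binom{n-i}{i}$ as the coefficient of $x^{i}$ in $T_{n}$ is the single non-routine ingredient, and it is already at hand.
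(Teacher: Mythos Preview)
Your proposal is correct and follows essentially the same route as the paper: compare the $(1,2)$-entries of $A^{2n}$, $(A^{2})^{n}$, and $(A^{n})^{2}$ for $A=\left(\begin{smallmatrix}1&1\\x&0\end{smallmatrix}\right)$, expand via Theorem~\ref{t1}, and equate coefficients of $x^{s}$. The paper writes the $(1,2)$-entry of $(A^{n})^{2}$ as $y_{n-1}(y_{n}+x\,y_{n-2})$ and then records the identity $y_{n}+x\,y_{n-2}=\sum_{i}\frac{n}{n-i}\binom{n-i}{i}x^{i}$ (this is \eqref{yeqn}), which is exactly your observation that the entry equals $y_{n-1}T_{n}$ with the stated coefficients for $T_{n}$.
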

\begin{proof}
With $A$ as defined above, we have
\[
A^{2}=
\left (
\begin{matrix}
x+1&  1\\
x&x
\end{matrix}
\right ).
\]
If we compare the $(1,2)$ entries of $A^{2n}$ and $(A^{2})^{n}$, using Theorem \ref{t1},
we get that
{\allowdisplaybreaks
\begin{align*}
 \sum_{s=0}^{n-1 }\binom{2n-s-1}{s}x^{s}
&=
 \sum_{i=0}^{\lfloor\frac{ n-1}{2} \rfloor}\binom{n-i-1}{i}(2x+1)^{n-2i-1}(-x^{2})^{i}\\
&=
 \sum_{i=0}^{\lfloor \frac{n-1}{2} \rfloor} \sum_{j=0}^{ n-2i-1 }
\binom{n-i-1}{i}\binom{n-2i-1}{j}2^{j}(-1)^{i}x^{2i+j}\\
&=
 \sum_{s=0}^{ n-1} \sum_{i\geq 0}
\binom{n-i-1}{i}\binom{n-2i-1}{s-2i}2^{s-2i}(-1)^{i}x^{s}.
\end{align*}
}
The equality of the first and third terms in \eqref{pwr2eq} follows on comparing powers of $x$.
On the other hand, Theorem \ref{t1} also gives that
{\allowdisplaybreaks
\begin{align*}
A^{2n}
&=
(A^{n})^{2}
=
\left (
\begin{matrix}
y_{n} & y_{n-1} \\
x\, y_{n-1}& y_{n}-y_{n-1}
\end{matrix}
\right )^{2}=
\left (
\begin{matrix}
y_{n} & y_{n-1} \\
x\, y_{n-1}& x\,y_{n-2}
\end{matrix}
\right )^{2}\\
&=\left (
\begin{matrix}
y_{n}^2+xy_{n-1}^{2} & y_{n-1}(y_{n}+x\,y_{n-2}) \\
x\, y_{n-1}(y_{n}+x\,y_{n-2})& x(y_{n-1}^2+x\,y_{n-2}^2)
\end{matrix}
\right ),
\end{align*}
}
where $y_{k}$ is as at \eqref{yeq}. It is easy to show that
\begin{equation}\label{yeqn}
y_{n}+x\,y_{n-2}=\sum_{i=0}^{\lfloor n/2 \rfloor}
\frac{n}{n-i}\binom{n-i}{i}x^{i}.
\end{equation}
If we compare the $(1,2)$ entries of $A^{2n}$ and $(A^{n})^{2}$
using \eqref{yeqn} and Theorem \ref{t1}, then
{\allowdisplaybreaks
\begin{align*}
 \sum_{s=0}^{n-1 }\binom{2n-s-1}{s}x^{s}
&=
 \sum_{k=0}^{\lfloor\frac{ n-1}{2} \rfloor}\sum_{i=0}^{\lfloor n/2 \rfloor}
\frac{n}{n-i}\binom{n-i}{i}\binom{n-k-1}{k}x^{i+k}\\
&=
 \sum_{s=0}^{ n-1} \sum_{i= 0}^{\lfloor n/2 \rfloor}
\frac{n}{n-i}\binom{n-i}{i}\binom{n+i-s-1}{s-i}x^{s}.
\end{align*}
}
The  equality of the second and third terms in \eqref{pwr2eq} now follows.
\end{proof}

A similar consideration of $A^{3n}$ and $(A^{3})^{n}$ gives the following identity.
\begin{corollary}
Let $n$ be a positive integer and $s$ an integer such that
$0 \leq s \leq \lfloor (3n-1)/2 \rfloor$. Then
\begin{equation}\label{3eq}
 \sum_{i = 0}^{\lfloor n/2 \rfloor}
3^{s-1-3i}
\binom{n-i-1}{i}\left ( \binom{n-2i}{s-3i-1} +3\binom{n-2i}{s-3i} \right)
=\binom{3n-s-1}{s}.
\end{equation}
\end{corollary}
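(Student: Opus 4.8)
The plan is to imitate the proof of the preceding corollary, this time comparing the $(1,2)$ entries in the two factorizations coming from $A^{3n}=\left(A^{3}\right)^{n}$ for the matrix $A=\left(\begin{smallmatrix}1&1\\x&0\end{smallmatrix}\right)$. By Theorem~\ref{t1} together with \eqref{yeq}, the $(1,2)$ entry of $A^{3n}$ is
\[
y_{3n-1}=\sum_{s}\binom{3n-1-s}{s}x^{s},
\]
so the coefficient of $x^{s}$ on this side is exactly the right-hand side $\binom{3n-s-1}{s}$ of \eqref{3eq}. Thus the whole content of the identity is a second evaluation of this same entry, obtained by first cubing $A$ and then taking the $n$-th power.

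First I would compute
\[
A^{3}=\left(\begin{matrix}1+2x&1+x\\x(1+x)&x\end{matrix}\right),
\]
read off its trace $T=1+3x$ and its determinant $D=(\det A)^{3}=-x^{3}$ (so that $-D=x^{3}$, with no intervening sign since $3$ is odd), and note that its $(1,2)$ entry is $1+x$. Applying Theorem~\ref{t1} to $B=A^{3}$ then shows that the $(1,2)$ entry of $\left(A^{3}\right)^{n}=B^{n}$ equals $(1+x)\,y^{(B)}_{n-1}$, where
\[
y^{(B)}_{n-1}=\sum_{i}\binom{n-1-i}{i}(1+3x)^{n-1-2i}x^{3i}.
\]
Equating this expression with $y_{3n-1}$ is the identity to be established.

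The remaining work is the extraction of the coefficient of $x^{s}$. Expanding $(1+3x)^{n-1-2i}=\sum_{j}\binom{n-1-2i}{j}3^{j}x^{j}$ and distributing the factor $1+x$ splits each summand into a contribution from the $1$ (where $3i+j=s$, i.e.\ $j=s-3i$) and one from the $x$ (where $3i+j=s-1$, i.e.\ $j=s-3i-1$); pulling out the common power $3^{s-1-3i}$ collapses these into a paired binomial of exactly the shape $\binom{\,\cdot\,}{s-3i-1}+3\binom{\,\cdot\,}{s-3i}$ appearing in \eqref{3eq}, after which comparing coefficients of $x^{s}$ finishes the proof. The main obstacle is precisely this bookkeeping: unlike the $A^{2}$ case, where the $(1,2)$ entry of $A^{2}$ is simply $1$, here the nontrivial $(1,2)$ entry $1+x$ of $A^{3}$ forces the two-term structure, and one must keep the powers of $3$ arising from $(1+3x)$ aligned with the two shifted summation indices. (In carrying this out the upper binomial index that emerges is $n-2i-1$, so it is worth checking the final indexing in \eqref{3eq} against the small cases $n=1,2$.)
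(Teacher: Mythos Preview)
Your approach is exactly the paper's: apply Theorem~\ref{t1} to $B=A^{3}$ to obtain
\[
\sum_{s}\binom{3n-s-1}{s}x^{s}=(x+1)\sum_{i}\binom{n-i-1}{i}(3x+1)^{n-2i-1}x^{3i},
\]
then expand and compare coefficients. Your parenthetical caution is in fact well founded: the expansion of $(3x+1)^{n-2i-1}$ produces the upper binomial index $n-2i-1$, not the $n-2i$ printed in \eqref{3eq}, and the small case $n=1$, $s=1$ confirms this (the printed form gives $4$ rather than $1$). So you have reproduced the paper's argument and, in the process, caught a typo in the stated identity.
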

\begin{proof}
Since
\[
A^{3}=\left (
\begin{matrix}
2x+1&  x+1\\
x^{2}+x&x
\end{matrix}
\right ),
\]
comparing the $(1,2)$ entries of $A^{3n}$ and $(A^{3})^{n}$, using Theorem \ref{t1},
gives
\begin{equation}\label{3eqa}
\sum_{s=0}^{ \lfloor\frac{3 n-1}{2} \rfloor}\binom{3n-s-1}{s}x^{s}
=(x+1)
 \sum_{i=0}^{\lfloor \frac{ n-1}{2} \rfloor }\binom{n-i-1}{i}(3x+1)^{n-2i-1}x^{3i}.
\end{equation}
The results follows, after a little simplification,
 upon comparing coefficients of like powers of $x$ on each side of \eqref{3eqa}.
\end{proof}

More generally, one can use the identity $A^{m+n}=A^{m}A^{n}$ together with
Theorem \ref{t1} to compare the $(1,1)$ entries on each side to get
(again using the notation from \eqref{yeq}) that
\begin{equation*}
y_{m+n}=y_{m}y_{n}+y_{m-1}(x\,y_{n-1}).
\end{equation*}
Upon collecting like powers of $x$ and equating coefficients on each side, we get
the following identity.

\begin{corollary}
Let $m$ and $n$ be a positive integer and $s$ an integer such that
$0 \leq s \leq \lfloor (m+n)/2  \rfloor $. Then
\begin{equation}\label{mneq}
\sum_{i \geq 0}
\binom{m-i}{i}\binom{n-s+i}{s-i}+\binom{m-i-1}{i}\binom{n-s+i}{s-i-1}
=\binom{m+n-s}{s}.
\end{equation}
\end{corollary}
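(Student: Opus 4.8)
The plan is to mimic the recipe used throughout this section: start from the matrix $A=\left(\begin{smallmatrix}1&1\\x&0\end{smallmatrix}\right)$, use the decomposition $A^{m+n}=A^{m}A^{n}$, and extract a polynomial identity in $x$ by comparing a single matrix entry on both sides. The statement $\binom{m+n-s}{s}$ on the right already signals that the target entry governs the sequence $\{y_{k}\}$ from \eqref{yeq}, since by \eqref{deteq} we know that $y_{k}=\sum_{s}\binom{k-s}{s}x^{s}$ appears as the $(1,1)$-entry of $A^{k}$. Indeed, the $(1,1)$-entry of $A^{m+n}$ is $y_{m+n}$, whose coefficient of $x^{s}$ is exactly $\binom{m+n-s}{s}$, matching the right-hand side of \eqref{mneq}.

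First I would record, directly from Theorem~\ref{t1} and \eqref{deteq}, the explicit $(1,1)$-entry of the matrix product $A^{m}A^{n}$. Writing $A^{m}=\left(\begin{smallmatrix}y_{m}&y_{m-1}\\x\,y_{m-1}&x\,y_{m-2}\end{smallmatrix}\right)$ and likewise for $A^{n}$, the $(1,1)$-entry of the product is $y_{m}y_{n}+y_{m-1}\,(x\,y_{n-1})$, which is precisely the relation $y_{m+n}=y_{m}y_{n}+x\,y_{m-1}y_{n-1}$ already derived in the excerpt. This is the structural backbone; everything else is bookkeeping on coefficients.

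Next I would substitute the closed form $y_{k}=\sum_{i\ge 0}\binom{k-i}{i}x^{i}$ into both products on the right. The term $y_{m}y_{n}$ contributes $\sum_{i}\binom{m-i}{i}x^{i}\sum_{j}\binom{n-j}{j}x^{j}$, and collecting the coefficient of $x^{s}$ (so $i+j=s$, hence $j=s-i$) yields $\sum_{i}\binom{m-i}{i}\binom{n-s+i}{s-i}$, the first summand in \eqref{mneq}. The term $x\,y_{m-1}y_{n-1}$ similarly contributes, after shifting the power of $x$ by one, $\sum_{i}\binom{m-1-i}{i}\binom{n-s+i}{s-i-1}$, the second summand. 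Equating the coefficient of $x^{s}$ on the left (namely $\binom{m+n-s}{s}$, from $y_{m+n}$) with the sum of these two contributions gives \eqref{mneq} exactly.

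I do not anticipate a genuine obstacle here, since this corollary is a straightforward specialization of the already-established identity $y_{m+n}=y_{m}y_{n}+x\,y_{m-1}y_{n-1}$. The only point demanding a little care is the index alignment in the Vandermonde-type convolution: one must verify that the shift by one in the power of $x$ coming from the explicit factor $x$ correctly turns $\binom{n-j}{j}$ with $j=s-i$ into $\binom{n-s+i}{s-i-1}$, and that the ranges of summation for $i$ are automatically controlled by the requirement that all binomial entries stay non-negative, matching the stated condition $0\le s\le\lfloor(m+n)/2\rfloor$. Once the two convolutions are written out with consistent indices, comparing coefficients of like powers of $x$ completes the proof immediately.
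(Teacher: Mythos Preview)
Your proposal is correct and follows exactly the paper's own argument: use $A^{m+n}=A^{m}A^{n}$ with $A=\left(\begin{smallmatrix}1&1\\x&0\end{smallmatrix}\right)$, compare $(1,1)$-entries to obtain $y_{m+n}=y_{m}y_{n}+x\,y_{m-1}y_{n-1}$, and then equate coefficients of $x^{s}$. The coefficient bookkeeping you outline is accurate and matches the paper's derivation line for line.
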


\section{Concluding Remarks}
Some other interesting consequences follow readily from Theorem \ref{t1}.
We consider two more.

If we let
$A=\left (
\begin{smallmatrix}
x&  0\\
0&y
\end{smallmatrix}
\right )$, then Waring's formula
\[
x^{n}+y^{n} = \sum_{j=0}^{\lfloor n/2 \rfloor}
\frac{n}{n-j}\binom{n-j}{j}(x+y)^{n-2j}(-x\,y)^{j}
\]
can be derived easily by considering the trace of $A^{n}$.

If we set $A=\left (
\begin{smallmatrix}
x&  1\\
1&0
\end{smallmatrix}
\right )$, then Theorem \ref{t1} and the correspondence between
continued fractions and matrices give that, for $x>0$,
\[
\lim_{n \to \infty}
\frac{ \displaystyle{ \sum_{j=0}^{\lfloor n/2 \rfloor}
\binom{n-j}{j}x^{n-2j}}}
{   \displaystyle{\sum_{j=0}^{\lfloor (n-1)/2 \rfloor}
\binom{n-1-j}{j}x^{n-1-2j}}}
= \frac{2}{\sqrt{x^2+4}-x}.
\]

 \allowdisplaybreaks{

}

\begin{thebibliography}{99}

\bibitem{BEW99}
Berndt, Bruce C.;  Evans, Ronald J.;  Williams, Kenneth S.
\emph{Gauss and Jacobi sums}.
Canadian Mathematical Society Series of Monographs and Advanced Texts.
A Wiley-Interscience Publication. John Wiley \& Sons, Inc., New York, 1998. xii+583 pp.




\bibitem{BR91} S. M. Bhatwadekar, A. Roy,
\emph{Some results on embedding of a line in $3$-space.}
 J. Algebra \textbf{142} (1991), no. 1, 101--109.

\bibitem{G72}
Gould, Henry W.
\emph{Combinatorial identities.
A standardized set of tables listing 500 binomial coefficient summations.}
Henry W. Gould, Morgantown, W.Va., 1972. viii+106 pp.




\bibitem{G81} H. W. Gould,
\emph{A history of the Fibonacci $Q$-matrix and a higher-dimensional problem.}
 Fibonacci Quart. \textbf{19}  (1981), no. 3, 250--257.

\bibitem{H03}P. Haukkanen,
\emph{Some characterizations of specially multiplicative functions.}
Int. J. Math. Math. Sci. \textbf{2003}, no. 37, 2335--2344.



\bibitem{J03} R. C. Johnson,
\emph{Matrix methods for Fibonacci and related sequences.} \\
http:// maths.dur.ac.uk/~dma0rcj/PED/fib.pdf, (August 2003).

\bibitem{McL04} James Mc Laughlin,
\emph{Combinatorial Identities Deriving from the $n$-th Power of a $2 \times 2$ Matrix. }
-- submitted


\bibitem{S93} B. Sury,
\emph{A curious polynomial identity. }
Nieuw Arch. Wisk. (4) \textbf{11} (1993), no. 2, 93--96.




\bibitem{W92} Kenneth S. Williams,
\emph{The nth Power of a $2 \times 2$ Matrix (in Notes).}
 Mathematics Magazine, Vol. \textbf{65}, No. 5. (Dec., 1992), p. 336.




\end{thebibliography}
\end{document}